\newcommand{\rmv}[1]{}
\def\wh{\widehat}
\def\MM{\ensuremath{{\mathscr M}}}
\def\pv#1{\ensuremath{{\bf#1}}}
\def\inv{^{-1}}
\def\p{\varphi}
\def\pinv{{\p \inv}}
\def\J{\mathrel{{\mathscr J}}} 
\def\L{\mathrel{{\mathscr L}}} 
\def\H{\mathrel{{\mathscr H}}} 
\def\e<{\leq _{E}}
\def\ov#1{\ensuremath{\overline {#1}}}
\def\til#1{\ensuremath{\widetilde {#1}}}
\def\malce{\mathbin{\hbox{$\bigcirc$\rlap{\kern-8.3pt\raise0,50pt\hbox{$\mathtt{m}$}}}}}
\def\FP#1#2{\ensuremath{\widehat{F}_{\mathbf #1}(#2)}}
\def\ovFP#1#2{\ensuremath{\widehat{F}_{\overline{\mathbf #1}}(#2)}}
\def\1sk{^{(1)}}
\def\to{\rightarrow}
\def\Thmname{Theorem}
\def\Propname{Proposition}
\def\Lemmaname{Lemma}
\def\Definitionname{Definition}
\newtheorem{Thm}{\Thmname}
\newtheorem{Prop}[Thm]{\Propname}
\newtheorem{Lemma}[Thm]{\Lemmaname}
\newtheorem{Cor}[Thm]{Corollary}
\newtheorem{Conjecture}[Thm]{Conjecture}
\numberwithin{equation}{section}
\title[On Free Profinite Subgroups of Free Profinite Monoids]{On Free Profinite Subgroups of Free Profinite Monoids}
\author{Benjamin Steinberg}
\address{School of Mathematics and Statistics\\
Carleton University \\
1125 Colonel By Drive\\
Ottawa, Ontario  K1S 5B6 \\
Canada}
\thanks{The author was supported in part by NSERC}
\email{bsteinbg@math.carleton.ca}
\date{\today}
\keywords{Free profinite monoids, free profinite groups}
\subjclass{20F20, 20M07}
\begin{document}
\begin{abstract}
We answer a question of Margolis from 1997 by establishing that the
maximal subgroup of the minimal ideal of a finitely generated free
profinite monoid is a free profinite group.  More generally if $\pv
H$ is variety of finite groups closed under extension and containing
$\mathbb Z/p\mathbb Z$ for infinitely may primes $p$, the
corresponding result holds for  free pro-$\ov{\pv H}$ monoids.
\end{abstract}
\maketitle

\section{Introduction}
Margolis asked in 1997 whether the maximal subgroup of the minimal
ideal of a finitely generated free profinite monoid is a free
profinite group;  this question first appeared in print (to the best
of our knowledge) in our paper with Rhodes~\cite{RhodesStein}. The
question was prompted by the discovery of free profinite subgroups
by Almeida and Volkov~\cite{AlmeidaVolkovfirst}, who subsequently
characterized those free profinite subgroups which are
retracts~\cite{AlmeidaVolkov}. Recently, Almeida has shown that not
all maximal subgroups of finitely generated free profinite monoids
are free profinite groups~\cite{Jorgesubgroup}, although he has provided
a large class of examples that are free profinite.  His fascinating technique
involves a correspondence between symbolic dynamical systems in
$X^{\omega}$ and certain $\J$-classes of the free profinite monoid
$\wh{X^*}$. In particular, his methods apply best to \emph{maximal}
infinite $\J$-classes, which correspond to \emph{minimal} dynamical
systems. The minimal ideal corresponds to the full shift
$X^{\omega}$ and so Almeida's approach does not yet apply to
studying this maximal subgroup.

The author and Rhodes recently established that closed subgroups of
free profinite monoids are projective profinite
groups~\cite{projective}.  This answered a question raised by
several people including Almeida, Margolis, Lubotzky and the
author~\cite{RhodesStein}.  Projectivity is a necessary, but far
from sufficient condition, for freeness~\cite{RZbook}.  In this
paper we answer Margolis's question in the affirmative.  We also
prove the analogous result relative to certain varieties of finite
groups. Recall that if $\pv H$ is a variety of finite groups, that
is a class of finite groups closed under taking direct products,
subgroups and quotient groups, then the class $\ov{\pv H}$ of
monoids whose subgroups belong to $\pv H$ is a variety of finite
monoids. Our main result is then:

\begin{Thm}\label{Theorem1}
Let $\pv H$ be a variety of finite groups closed under extension,
which contains $\mathbb Z/p\mathbb Z$ for infinitely may primes $p$.
Then the maximal subgroup of the minimal ideal of a finitely
generated (but not procyclic) free pro-{\ov {\pv H}} monoid is a
free pro-\pv H group of countable rank.
\end{Thm}

If \pv V is a variety of finite monoids, then $\FP V X$ denotes the
free pro-\pv V monoid generated by $X$.  The natural projection
$\pi:\ovFP H X\to \FP H X$ restricts to an epimorphism on the
maximal subgroup $G$ of the minimal ideal of $\ovFP H
X$~\cite{AlmeidaVolkov,RhodesStein}.  Our second result describes
the kernel of the epimorphism $G\twoheadrightarrow \FP H X$.

\begin{Thm}\label{Theorem2}
Let $\pv H$ be a variety of finite groups closed under extension,
containing $\mathbb Z/p\mathbb Z$ for infinitely may primes $p$, and
let $X$ be a finite set of cardinality at least two. Let
$\p:G\twoheadrightarrow \FP H X$ be the canonical epimorphism, where
$G$ is the maximal subgroup of the minimal ideal of $\ovFP H X$.
Then $\ker \p$ is a free pro-\pv H group of countable rank.
\end{Thm}

It seems likely that our results hold for any non-trivial
extension-closed variety of finite groups.  The hypothesis on primes
is entirely of a technical nature and should not really be essential. For
example, since projective pro-$p$ groups are free
pro-$p$~\cite{RZbook}, Theorems~\ref{Theorem1} and~\ref{Theorem2}
are valid for \pv H the variety of finite $p$-groups.   We further propose
the following conjecture.

\begin{Conjecture}
Under the hypotheses of Theorem~\ref{Theorem1} the maximal subgroup
of the closed subsemigroup generated by the idempotents of the
minimal ideal of a finitely generated (non-procyclic) free pro-{\ov
{\pv H}} monoid is a free pro-\pv H group of countable rank.
\end{Conjecture}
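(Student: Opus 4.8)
The plan is to deduce the Conjecture from Theorem~\ref{Theorem2} by passing to Rees coordinates for the minimal ideal. Write $M=\ovFP H X$, let $K$ be its minimal ideal, and fix a maximal subgroup $G$ of $K$ at an idempotent $e$; by Theorem~\ref{Theorem1}, $G$ is a free pro-\pv H group of countable rank. Choose Rees coordinates $K\cong\mathcal{M}(G;I,\Lambda;P)$ with sandwich matrix $P=(p_{\lambda i})$ normalised so that a fixed row and column consist of identities and $e$ sits in the $(i_0,\lambda_0)$-position. The idempotents of $K$ are exactly the elements $(i,p_{\lambda i}\inv,\lambda)$, and a routine product computation shows that the closed subsemigroup $T$ generated by $E(K)$ is again completely simple on the same index sets, with maximal subgroup at $e$ equal to the closed subgroup $G_T=\overline{\langle p_{\lambda i}:\lambda\in\Lambda,\ i\in I\rangle}$ of $G$ generated by the normalised sandwich entries. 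Thus the Conjecture is the single assertion that $G_T$ is free pro-\pv H of countable rank.

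First I would pin down the relation between $G_T$ and the epimorphism $\p$ of Theorem~\ref{Theorem2}. Since \FP H X is a group, the only idempotent of its minimal ideal is $1$, so the canonical projection $\pi:M\onto\FP H X$ collapses every idempotent of $K$ to $1$; hence $\pi(T)=\{1\}$ and, writing $\p=\pi|_G$ (which is onto \FP H X by the remark following Theorem~\ref{Theorem1}), we get $G_T\subseteq\ker\p$. More precisely, collapsing $K$ to its maximal pro-\pv H group image forces each normalised $p_{\lambda i}$ to $1$, so $\ker\p$ contains — and one expects equals — the normal closure of $G_T$ in $G$. By Theorem~\ref{Theorem2}, $\ker\p$ is free pro-\pv H of countable rank, so at this stage $G_T$ is already known to be a closed subgroup of a free pro-\pv H group, and therefore projective.

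The crux, and the step I expect to be the \emph{main obstacle}, is to upgrade ``projective'' to ``free'' for $G_T$ itself. In contrast with Theorem~\ref{Theorem2}, where normality of the kernel was decisive, $G_T$ is a priori only a (possibly non-normal, and certainly non-open) closed subgroup of the free group $\ker\p$, and closed subgroups of free pro-\pv H groups need merely be projective. I see two lines of attack. The first is to prove that $G_T$ is in fact normal in $G$: then $G_T=\ker\p$ and freeness is immediate from Theorem~\ref{Theorem2}; this reduces to showing that the subgroup generated by the normalised sandwich entries of the full-shift $\J$-class is stable under $G$-conjugation. The second, which I expect to be the realistic route, is to realise $G_T$ as a free factor of $\ker\p$ (free factors of free pro-\pv H groups are free) or to verify the pro-\pv H Iwasawa embedding-problem criterion for $G_T$ directly: given finite $A,B\in\pv H$, a surjection $G_T\onto A$ and a cover $B\onto A$, one must produce a lift $G_T\onto B$. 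Here I would exploit the combinatorial abundance of idempotents in the minimal ideal — which corresponds to the full shift $X^{\omega}$ — to build the required covers from explicit products of idempotents, using closure of \pv H under extension to keep all auxiliary finite groups inside \pv H. Controlling surjectivity of the lift uniformly over the infinite index sets $I$ and $\Lambda$ is the genuinely hard point, and is where I expect the hypothesis $|X|\ge 2$ to be needed.
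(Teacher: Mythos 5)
There is a genuine gap here, and in fact a prior point you should be aware of: the statement you are proving is labelled a \emph{Conjecture} in the paper, and the paper offers no proof of it. The authors only remark that they ``suspect a slight variation of the construction used to prove Theorem~\ref{Theorem1} already suffices,'' i.e.\ a direct verification of the Iwasawa embedding-problem criterion via an explicit wreath-product construction. So there is no proof in the paper to compare yours against; the question is whether your proposal closes the conjecture on its own, and it does not.

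Your reduction is sound as far as it goes: Graham's theorem (in its profinite incarnation, as invoked in the proof of Theorem~\ref{Theorem2}) gives that $T=\ov{\langle E(K)\rangle}$ is closed simple with maximal subgroup $G_T=T\cap G_e$ at $e$, and since $\pi(E(K))=\{1\}$ you correctly get $G_T\leq\ker\p$, hence $G_T$ is a closed subgroup of a free pro-$\pv H$ group and therefore projective. But projectivity is exactly where Theorems~\ref{Theorem1} and~\ref{Theorem2} also had to start (the paper cites~\cite{projective} for this and stresses that projectivity is ``far from sufficient'' for freeness), so your reduction recovers the known necessary condition without advancing the conjecture. The step you flag as ``the crux'' is indeed the entire content of the statement, and neither of your two lines of attack is carried out. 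The first (normality of $G_T$ in $G$, forcing $G_T=\ker\p$) is asserted only as a hope and is not supported by any computation with the sandwich entries; if it failed, nothing in your argument survives. The second (verifying the embedding-problem criterion for $G_T$ directly by building finite covers from products of idempotents) is precisely the construction the paper says would be needed and does not supply; you would have to produce an analogue of Lemma~\ref{maintechnical} in which the finite monoid $M'$ realises the cover $H\onto K$ \emph{inside the idempotent-generated part} of its minimal ideal, and no such construction appears in your proposal. A further caution: your identification of $G_T$ with the closed subgroup generated by the normalised sandwich entries is the finite Graham description transplanted to a setting where the index sets $I$ and $\Lambda$ are profinite spaces; that identification itself requires an inverse-limit argument you have not given. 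As it stands, the proposal is a correct framing of the problem, not a proof.
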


In fact, we suspect a slight variation of the construction used to
prove Theorem~\ref{Theorem1} already suffices to prove the
conjecture, the remaining issues being purely technical.  The proof
of Theorem~\ref{Theorem1} relies on a criterion for freeness, due to
Iwasawa~\cite{Iwasawa}, and extensive usage of wreath products. In
spirit the proof draws from the following sources: our previous work
with Rhodes~\cite{projective}, the synthesis
theorem~\cite{synthesis} and the classical construction embedding
any countable group as a maximal subgroup of a two-generated monoid
consisting of a cyclic group of units and a completely simple
minimal ideal.

\section{Minimal ideals}\label{minimal}
In this section we collect a number of standard facts concerning
minimal ideals in finite and profinite semigroups, which can be
found, for instance, in~\cite{CP,Arbib,qtheor}. If $S$ is a
semigroup, then $E(S)$ denotes the set of idempotents of $S$. For an
idempotent $f\in E(S)$, the group of units $G_f$ of the monoid $fSf$
is called the maximal subgroup of $S$ at $f$.

The first fact is that every profinite monoid $M$ has a unique
minimal ideal $I$.  It is necessarily closed and if $x\in I$, then
$I=MxM$.  Since every compact semigroup contains an idempotent, it
follows that $I$ contains an idempotent $e$.  Since compact
semigroups are stable~\cite{qtheor}, Green-Rees structure
theory~\cite{CP,Arbib,qtheor} implies that the maximal subgroup
$G_e$ is $eIe$ and furthermore is a closed subgroup (and hence a
profinite group), which is independent of the choice of $e$ up to
isomorphism.

\begin{Prop}\label{minontominimal}
Let $\p:S\twoheadrightarrow  T$ be a continuous onto homomorphism of
profinite monoids.  Let $I$ be the minimal ideal of $S$ and $J$ be
the minimal ideal of $T$.  Then $\p(I)=J$ and moreover, if $e\in
E(I)$, then $\p(G_e)$ is the maximal subgroup of $J$ at $\p(e)$.
\end{Prop}
\begin{proof}
Clearly $\pinv(J)$ is an ideal
of $S$ so $I\subseteq \pinv(J)$, i.e.\ $\p(I)\subseteq J$. On the
other hand $\p(I)$ is an ideal of $T$ since $\p$ is onto.  Thus
$\p(I) = J$ by minimality.  Now $\p(G_e) = \p(eIe) = \p(e) \p(I)
\p(e) = \p(e) J\p(e) = G_{\p(e)}$, completing the proof.
\end{proof}

In particular, every profinite group image of a profinite monoid $M$
is an image of the maximal subgroup of its minimal ideal.

The minimal ideal $I$ of a finite monoid $M$ is a simple semigroup,
and hence isomorphic to a Rees matrix semigroup $\MM(G,A,B,C)$ where
$C:B\times A\to G$ is the sandwich matrix~\cite{CP,Arbib,qtheor}.
Let $a_0\in A$ and $b_0\in B$.  Then without loss of generality we
may assume that each entry of row $b_0$ and of column $a_0$ is the
identity of $G$~\cite{Arbib,qtheor}.  We can identify $G$ with the
$\H$-class $a_0\times G\times b_0$.

Recall that $B$ can be identified with the $\L$-classes of $I$.
There is a natural action of $M$ on the right of $B$ since $\L$ is a
right congruence.  Let $(B,\mathsf{RLM}_I(M))$ be the associated
faithful transformation semigroup.  Notice that each element of $I$
acts on $B$ as a constant map and that all constant maps on $B$
arise from elements of $I$. The Sch\"utzenberger representation
gives a wreath product representation $M\to G\wr
(B,\mathsf{RLM}_I(M))$~\cite{CP,Arbib,qtheor}. An element
$s=(a,g,b)\in I$ is sent to the element $(f_s,\ov b)$ where $b'f_s =
b'Cag$ and $\ov b$ is the constant map to $b$. In particular, if
$s=(a_0,g,b_0)$ is an element of our maximal subgroup, then $b'f_s =
g$ all $b'\in B$.  Consequently, the Sch\"utzenberger representation
is faithful on the maximal subgroup $G$.

We recall that the finite simple  semigroups form a variety of
finite semigroups denoted $\pv {CS}$.  It is well
known~\cite{Almeida:book} that $\pv {CS} = \pv G\ast \pv {RZ}$, that
is, it consists precisely of divisors of wreath products of finite
groups and right zero semigroups. In fact, we shall need the
following more explicit lemma.

\begin{Lemma}\label{structureofwreath}
Let $S = G\wr (B,\ov B)$ where $G$ is a finite group and $\ov B$ is
the semigroup of constant maps on the set $B$.  Then $S$ is simple
and the maximal subgroup of $S$ is isomorphic to $G$.  More
precisely, if $e=(f,\ov b)$ is an idempotent then the map
$\psi:eSe\to G$ given by $\psi(f',\ov b)= bf'$ is an isomorphism.
\end{Lemma}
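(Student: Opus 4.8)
The plan is to work directly from the wreath-product multiplication, exploiting that $\ov B$ consists only of constant maps. Writing elements of $S$ as pairs $(f,\ov b)$ with $f\colon B\to G$ and $\ov b$ the constant map onto $b\in B$, and using the right-action convention of the preceding paragraph (so $tf$ denotes the value of $f$ at $t\in B$), the product is
\[
(f_1,\ov{b_1})(f_2,\ov{b_2}) = (h,\ov{b_2}),\qquad th=(tf_1)(b_1f_2)\quad(t\in B).
\]
I would record this formula first, since every assertion of the lemma is a one-line consequence of it. Two features are worth isolating: the second coordinate of a product is always that of the \emph{right} factor, and the value $th$ uses $f_2$ only through the single element $b_1f_2\in G$.

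To see that $S$ is simple I would verify $SxS=S$ for an arbitrary $x=(f,\ov b)$. With $s_1=(u,\ov{b_1})$ and $s_2=(v,\ov c)$ the formula gives $s_1xs_2=(p,\ov c)$ where $tp=(tu)(b_1f)(bv)$. Given any target $(g,\ov c)$, choosing $v\equiv 1_G$ and then $tu=(tg)(b_1f)\inv$ produces $s_1xs_2=(g,\ov c)$. Hence $SxS=S$ (only $B\neq\emptyset$ is needed), so $S$ is simple.

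For the maximal subgroup I would first characterize idempotents: squaring $(f,\ov b)$ and comparing first coordinates shows $(f,\ov b)\in E(S)$ if and only if $bf=1_G$. Fix such an $e=(f,\ov b)$. Evaluating $ese$ for $s=(g,\ov c)$ gives $(h,\ov b)$ with $th=(tf)k$ and $k=(bg)(cf)$; as $s$ varies, $k$ runs over all of $G$, so $eSe=\{(h,\ov b): th=(tf)k,\ k\in G\}$. Since all elements of $eSe$ share the second coordinate $\ov b$, for $x=(h,\ov b)$ and $y=(h',\ov b)$ in $eSe$ the formula gives $xy=(p,\ov b)$ with $tp=(th)(bh')$, so evaluating at $t=b$ yields $bp=(bh)(bh')$. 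Consequently $\psi(f',\ov b)=bf'$ satisfies $\psi(xy)=\psi(x)\psi(y)$ on $eSe$; it maps onto $G$ because $k=bh$ runs over all of $G$, and it is injective because $th=(tf)(bh)$ (using $bf=1_G$) exhibits $h$ as determined by $bh$. Hence $\psi$ is an isomorphism, $eSe$ is a group, and therefore coincides with its own group of units, namely the maximal subgroup of $S$ at $e$.

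The computation is entirely routine; the only genuine pitfall is bookkeeping of the wreath-product convention — which factor contributes the surviving second coordinate and whether the action is on the left or the right. Once the displayed product formula is pinned down correctly, each clause of the lemma reduces to reading off a single coordinate, so there is no substantial obstacle beyond getting that formula right.
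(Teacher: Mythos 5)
Your proposal is correct and follows essentially the same route as the paper: pin down the wreath-product multiplication, check that evaluation at $b$ is a homomorphism, get injectivity from the identity $th=(tf)(bh)$ forced by left multiplication by $e$, and get surjectivity by sandwiching arbitrary elements between $e$'s. The only cosmetic difference is that you verify simplicity by the direct computation $SxS=S$, which the paper merely notes is possible while instead citing the earlier observation that such wreath products are simple.
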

\begin{proof}
We have already observed that $S$ is simple (this can also be
verified by direct computation). Let $e=(f,\ov b)$ be an idempotent
of $S$. We must show $\psi$ defined as above is an isomorphism.
  First we verify
$\psi$ is a homomorphism. Indeed, $(f',\ov b)(f'',\ov b) =
(f'{}^{\ov b}\!{f''},\ov b)$ and $b(f'{}^{\ov b}\!{f''}) = bf'bf''$.
In particular, we have $1=\psi(e)=bf$.

To see $\psi$ is injective, note that $(f',\ov b)\in G_e$ implies
$(f',\ov b) = (f,\ov b)(f',\ov b) = (f{}^{\ov b}\!{f'},\ov b)$ and
so $b'f' = b'fbf'$ all $b'\in B$.  Thus $f'$ is determined by
$bf'=\psi(f',\ov b)$ and so $\psi$ is injective. Finally to verify
$\psi$ is onto, let $g\in G$ and consider $(f',\ov b)=e(\ov g,\ov
b)e$ where $\ov g$ is the constant map $B\to G$ taking all of $B$ to
$g$.  Then $bf' = (bf)(b\ov g)(bf) = g$ since $bf=1$.  Thus
$\psi(f',\ov b) =g$, establishing $\psi$ is onto.
\end{proof}

\section{The proofs of Theorems~\ref{Theorem1} and~\ref{Theorem2}}
In this section we prove Theorems~\ref{Theorem1} and~\ref{Theorem2}
modulo two technical lemmas.  Fix a variety of finite groups \pv H
closed under extension and containing $\mathbb Z/p\mathbb Z$ for
infinitely many primes $p$. Denote by $\ov {\pv H}$ the variety of
finite monoids whose subgroups belong to \pv H.

\subsubsection*{Proof of Theorem~\ref{Theorem1}}
 Let $X=\{x_1,\ldots,x_n\}$ be a finite set of cardinality at least
two. Denote by $I$ the minimal ideal of $\ovFP H X$. Choose an
idempotent $e\in I$. Recall that if $x$ is an element of a profinite
semigroup, then $x^{\omega} = \lim x^{n!}$ is the unique idempotent
in the closed subsemigroup generated by $x$. Without loss of
generality we may assume $x_1^{\omega}e=e=ex_2^{\omega}$; if not
replace $e$ with $(x_1^{\omega}ex_2^{\omega})^{\omega}$.
 Let $G_e$ be the maximal
subgroup at $e$. Our goal is to show  $G_e$ is free pro-\pv H on a
countable set of generators converging to the identity (that is,
free of countable rank).  Recall that a subset $Y$ of a profinite
group $G$ converges to the identity if each neighbourhood of the
identity contains all but finitely many elements of $Y$.  A pro-\pv
H group $F$ is free pro-\pv H on a subset $Y$ converging to the
identity if given any map $\tau:Y\to H$ with $H$ pro-\pv H and
$\tau(Y)$ converging to the identity, there is a unique extension of
$\tau$ to $F$.  Any free pro-\pv H group on a profinite space has a
basis converging to the identity.  See~\cite{RZbook} for details.

It is well known $\ovFP H X$ is
metrizable~\cite{Almeida:book,qtheor}, and hence so is $G_e$.
Thus the identity $e$ of $G_e$ has a countable basis of
neighbourhoods.  We shall use a well-known criterion, going back to
Iwasawa~\cite{Iwasawa}, to establish $G_e$ is free pro-\pv H of
countable rank. An
\emph{embedding problem} for $G_e$ is a diagram
\begin{equation}\label{embeddingproblem}
\begin{diagram}
 &                & G_e\\
 &                 &\dOnto_{\p}\\
H&\rOnto^{\alpha}&K
\end{diagram}
\end{equation}
with $H\in \pv H$ and $\p,\alpha$ epimorphisms ($\p$ continuous).
A \emph{solution} to the embedding problem \eqref{embeddingproblem}
is a continuous \textbf{epimorphism} $\til\p:G_e\to H$ making the diagram
\[\begin{diagram}
\begin{diagram}
 &                & G_e\\
 &     \ldOnto^{\til\p}            &\dOnto_{\p}\\
H&\rOnto^{\alpha}&K
\end{diagram}
\end{diagram}\]
commute.  According to~\cite[Corollary 3.5.10]{RZbook} to prove
$G_e$ is free pro-\pv H of countable rank it suffices to show that
every embedding problem \eqref{embeddingproblem} for $G_e$ has a
solution. We proceed via a series of reductions on the types of
embedding problems we need to consider.

Since $G_e$ is a closed subgroup of $\ovFP H X$, there is a
continuous onto homomorphism $\p':\ovFP H X\twoheadrightarrow M'$
with $M'$ a finite monoid in $\ov{\pv H}$ such that $\ker
\p'|_{G_e}\leq \ker \p$. Setting $K' = \p'(G_e)$, let
$\rho:K'\twoheadrightarrow K$ be the canonical projection. Defining
$H'$ to be the pullback of $\alpha$ and $\rho$, that is $H' =
\{(h,k')\in H\times K'\mid \alpha(h)=\rho(k')\}$, yields a
commutative diagram
\begin{equation*}
\begin{diagram}
                   &                 &              &                & &G_e\\
                   &                 &              & &\ldOnto(3,2)^{\p'}\ldOnto(3,4)_{\p}  &\\
H'&\rOnto^{\alpha'} & K'         &  &              &\\
\dOnto^{\rho^*}             &                 & \dOnto_{\rho}&   &             & \\
H                  &\rOnto^{\alpha}  & K            &&&
\end{diagram}
\end{equation*}
where $\rho^*$ is the projection to $H$.  It is easily verified that
all the arrows in the diagram are epimorphisms.  So to solve our
original embedding problem, it suffices to solve the embedding
problem:
\[\begin{diagram}
\begin{diagram}
 &                & G_e\\
 &     &\dOnto_{\p'}\\
H'&\rOnto^{\alpha'}&K'
\end{diagram}
\end{diagram}\]
In other words, reverting back to our original notation, we may
assume  in the embedding problem \eqref{embeddingproblem} the
map $\p$ is the restriction of a continuous onto homomorphism
$\p:\ovFP H X\twoheadrightarrow M$ with $M\in \ov{\pv H}$.  Let $J$
be the minimal ideal of $M$; so $J=\p(I)$.  Then the right
Sch\"utzenberger representation~\cite{CP,qtheor,Schutzmonomial} of
$M$ on $J$ is faithful when restricted to $K$.  Possibly replacing
$M$ by its image under the Sch\"utzenberger representation, we may
assume that the right Sch\"utzenberger representation of $M$ on $J$
is faithful. Therefore, we may view $M$ as embedded in the wreath
product $K\wr (B,\mathsf{RLM}_J(M))$.   The existence of a solution
then follows from the following technical lemma, which is the
subject of Section~\ref{s:proofofmaintech}.

\begin{Lemma}\label{maintechnical}
Let $\p:\ovFP H X\twoheadrightarrow M$ be a continuous surjective
morphism, with $M$ finite, such that $\p(G_e) = K$ and the (right)
Sch\"utzenberger representation of $M$ on its minimal ideal $J$ is
faithful. Let $\alpha:H\twoheadrightarrow K$ be an epimorphism.
 Then there is an $X$-generated finite monoid $M'\in \ov{\pv H}$
 such that if $\eta:\ovFP H X\to M'$ is the continuous projection,
 then:
\begin{enumerate}
\item there is an isomorphism $\theta:G_{\eta(e)}\to H$ where
  $G_{\eta(e)}$ is the maximal subgroup at $\eta(e)$ of the minimal
  ideal of $M'$;
\item $\p$ factors through $\eta$ as $\rho\eta$ where
$\rho:M'\twoheadrightarrow M$ satisfies $\rho\theta\inv = \alpha$.
\end{enumerate}
\end{Lemma}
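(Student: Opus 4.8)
The plan is to realize the required monoid $M'$ as an $X$-generated submonoid of the wreath product $H\wr(B,T)$, where $T=\mathsf{RLM}_J(M)$, obtained by lifting the generators of $M$ through $\alpha$. Using the faithful Sch\"utzenberger representation, view $M\subseteq K\wr(B,T)$, and let $\beta:H\wr(B,T)\twoheadrightarrow K\wr(B,T)$, $\beta(f,t)=(\alpha\circ f,t)$, be the surjective homomorphism induced by $\alpha$ in the first coordinate. For each generator write $\p(x_i)=(f_i,t_i)\in M$, choose any lift $\tilde f_i:B\to H$ of $f_i$ (possible since $\alpha$ is onto), put $\eta(x_i)=(\tilde f_i,t_i)$, and set $M'=\langle\eta(x_1),\dots,\eta(x_n)\rangle\le H\wr(B,T)$. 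Once we check $M'\in\ov{\pv H}$, the universal property yields a continuous surjection $\eta:\ovFP H X\twoheadrightarrow M'$ with $x_i\mapsto\eta(x_i)$, and $\rho:=\beta|_{M'}:M'\twoheadrightarrow M$ is a homomorphism with $\rho\eta(x_i)=\p(x_i)$, whence $\rho\eta=\p$ by density of $X$; this already gives the factorization in condition (2).

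First I would verify $M'\in\ov{\pv H}$. Every subgroup $G'$ of $H\wr(B,T)$ lies in a maximal subgroup at some idempotent $(f_0,t_0)$, and the projection $\pi_2:(f,t)\mapsto t$ sends $G'$ onto a subgroup of $T$, which lies in $\pv H$ because $T$ divides $M\in\ov{\pv H}$; meanwhile $\ker(\pi_2|_{G'})$ embeds in a finite direct power of $H$ and so also lies in $\pv H$. As $\pv H$ is closed under extension, $G'\in\pv H$, so $H\wr(B,T)\in\ov{\pv H}$ and hence so is its submonoid $M'$. This is the step at which extension-closure of $\pv H$ is used.

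Next I would locate $\eta(e)$ and construct $\theta$. By Proposition~\ref{minontominimal} applied to $\eta$, we have $\eta(I)=I'$, the minimal ideal of $M'$, so $\eta(e)\in I'$ and $G_{\eta(e)}=\eta(e)M'\eta(e)$ is a group. Since $\pi_2\eta=\pi_2\p$ and $\p(e)\in J$ projects to a constant map, $\eta(e)=(f,\ov b)$ lies in $S:=H\wr(B,\ov B)$, the minimal ideal of $H\wr(B,T)$, which is simple with maximal subgroup $H$ by Lemma~\ref{structureofwreath}. Because the minimal ideal is contained in every ideal we get $I'\subseteq M'\cap S$, so $G_{\eta(e)}\subseteq S$ and $\theta:=\psi_H|_{G_{\eta(e)}}$ is defined and injective, where $\psi_H$ is the isomorphism of Lemma~\ref{structureofwreath}. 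The comparison $\psi_K\circ\beta=\alpha\circ\psi_H$ on $S$ then gives $\rho|_{G_{\eta(e)}}=\alpha\circ\theta$, i.e.\ $\rho\theta\inv=\alpha$, so condition (2) and the injective half of condition (1) are in hand.

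It remains to prove that $\theta$ is onto $H$, and this I expect to be the main obstacle. Proposition~\ref{minontominimal} applied to $\rho$ gives $\rho(G_{\eta(e)})=G_{\p(e)}=K$, so $\alpha(\theta(G_{\eta(e)}))=K$ and the subgroup $\theta(G_{\eta(e)})\le H$ already surjects onto $K=H/\ker\alpha$; consequently $\theta$ is onto exactly when $\ker\alpha\subseteq\theta(G_{\eta(e)})$, that is, when the elements of $G_{\eta(e)}$ lying over $1\in K$ realize all of $\ker\alpha$. For arbitrary lifts $\tilde f_i$ this can fail, so the heart of the argument---deferred to Section~\ref{s:proofofmaintech}---is to choose the lifts so that the word combinatorics producing $\eta(e)$ sweep out the entire kernel in the fibre over $\p(e)$. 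Here the hypotheses $|X|\ge 2$, the normalization $x_1^{\omega}e=e=ex_2^{\omega}$, and the assumptions that $\pv H$ is extension-closed and contains $\mathbb Z/p\mathbb Z$ for infinitely many primes $p$ are precisely what supply enough room to generate $\ker\alpha$, completing the surjectivity of $\theta$ and thereby condition (1).
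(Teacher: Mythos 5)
Your setup is sound as far as it goes: viewing $M$ inside $K\wr(B,T)$ via the faithful Sch\"utzenberger representation, inducing $\beta$ from $\alpha$, checking $H\wr(B,T)\in\ov{\pv H}$ via extension-closure, and deducing the factorization $\p=\rho\eta$ together with injectivity of $\theta$ and the identity $\rho\theta\inv=\alpha$ all match the preliminary steps of the paper's argument. But the proof has a genuine gap at exactly the point you flag: the surjectivity of $\theta$, i.e.\ showing $\ker\alpha\subseteq\theta(G_{\eta(e)})$, is the entire content of the lemma, and you do not prove it --- you state that one must ``choose the lifts so that the word combinatorics producing $\eta(e)$ sweep out the entire kernel'' and defer this to the very section in which the lemma is supposed to be proved. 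As you yourself observe, for arbitrary lifts the claim is false (e.g.\ if $\alpha$ splits, lifting along a splitting gives $G_{\eta(e)}\cong K$), so without an explicit construction and verification nothing has been established.

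Moreover, it is doubtful that any choice of lifts inside the single wreath product $H\wr(B,T)$ suffices; the paper finds it necessary to enlarge the ambient monoid to the iterated wreath product $H\wr(B,\mathsf{RLM}_J(M))\wr\ov{([p],C_p)}$ for a prime $p>\max\{m,|N|^{|B|}\}$ with $\ZZ/p\ZZ\in\pv H$. The generator $\til x_1$ acts as a $p$-cycle on the new coordinate, while each $\til x_i$ ($i\ge2$) collapses to the first column and lists \emph{all} $\ell=|N|^{|B|}$ elements of $N^{B}$ times $M_{x_i}^{\sigma}$ among its block entries; an induction (Proposition~\ref{allthere} and Corollary~\ref{minimalidealguyallthere}) then shows every $\ov\alpha$-preimage of $M_e$ occurs as a block entry of $\eta(e)$, and powers of $C=\til x_1^{mr}$ rotate each such preimage into the $1,1$ block, so that the elements $\eta(e)C^j\eta(e)$ realize all of $N$ in $\theta(G_{\eta(e)})$. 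This is also where the hypothesis on primes is genuinely used --- to place $C_p$ in $\pv H$ with $p$ large enough --- whereas in your write-up it appears only as an unexplained source of ``room.'' The normalizations $x_1^{\omega}e=e=ex_2^{\omega}$ likewise play concrete roles there (via Lemma~\ref{preimages} and the location of the blocks of $\eta(e)$) that your argument never invokes.
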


Assuming the lemma,  our
desired solution to the embedding problem \eqref{embeddingproblem} is
$\til{\p}=\theta \eta|_{G_e}:G_e\to H$.
Indeed, $\eta|_{G_e}$ is an epimorphism by
Proposition~\ref{minontominimal} and hence $\til{\p}$ is an
epimorphism. Moreover, $\alpha\til{\p}=\rho\theta\inv
\theta\eta|_{G_e} = \p|_{G_e}$ and so $\til{\p}$ is indeed a
solution to the embedding problem \eqref{embeddingproblem}. This
completes the proof of Theorem~\ref{Theorem1}.\qed

\subsubsection*{Proof of Theorem~\ref{Theorem2}} Let $\pi:\ovFP H X\to \FP H X$ be
the canonical projection; so $\p=\pi|_G$ where $G$ is the maximal
subgroup of the minimal ideal $I$ of $\ovFP H X$.  Let $N=\ker \p$.
We shall use a criterion due to Mel'nikov to prove that $N$ is free
pro-\pv H. We first need to recall the notion of
$S$-rank~\cite{RZbook}. If $S$ is a finite simple group and $G$ is a
profinite group, denote by $M_S(G)$ the intersection of all open
normal subgroups $N$ of $G$ such that $G/N\cong S$.  It is
known~\cite[Chapter 8.2]{RZbook} that $G/M_S(G)\cong \prod_A S$, a
direct product of copies of $S$ indexed by $A$. The cardinality
$r_S(G)$ of $A$ is called the \emph{$S$-rank} of $G$. One property
of $S$-rank that we  shall need is part of~\cite[Lemma
8.2.5]{RZbook}.

\begin{Lemma}\label{ctsimage}
Suppose $H$ is a continuous image of $G$, then $r_S(H)\leq r_S(G)$.
\end{Lemma}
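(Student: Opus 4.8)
The plan is to exploit the functoriality of the construction $M_S(\cdot)$ and then reduce to a computation inside a direct power of $S$. Write $f\colon G\twoheadrightarrow H$ for the given continuous epimorphism. First I would observe that for every open normal subgroup $N$ of $H$ with $H/N\cong S$, the preimage $f\inv(N)$ is an open normal subgroup of $G$ with $G/f\inv(N)\cong H/N\cong S$, and that $N\mapsto f\inv(N)$ is injective (since $N=f(f\inv(N))$ by surjectivity of $f$). Intersecting over all such $N$ gives $M_S(G)\subseteq f\inv(M_S(H))$, equivalently $f(M_S(G))\subseteq M_S(H)$, so $f$ descends to a continuous epimorphism
\[
\bar f\colon G/M_S(G)\twoheadrightarrow H/M_S(H).
\]
By the definition of $S$-rank this is a continuous epimorphism $\prod_A S\twoheadrightarrow \prod_B S$ with $|A|=r_S(G)$ and $|B|=r_S(H)$, so everything reduces to showing that a continuous epimorphism $\prod_A S\twoheadrightarrow \prod_B S$ forces $|B|\leq |A|$.

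The main obstacle is that the obvious move, comparing orders via $|{\prod_A S}|\geq |{\prod_B S}|$, only yields $|A|\geq|B|$ when the ranks are finite; for infinite index sets $|S|^{|A|}\geq |S|^{|B|}$ carries no information without extra set-theoretic hypotheses. I would therefore replace ``number of elements'' by an invariant whose cardinality is \emph{exactly} the rank and which transforms contravariantly and injectively under epimorphisms. Here a case distinction on $S$ seems unavoidable. If $S$ is nonabelian, I would show that the open normal subgroups $N$ of $\prod_A S$ with quotient $\cong S$ are precisely the coordinate subgroups $K_a=\prod_{b\neq a}S$, one for each $a\in A$: any continuous homomorphism $\prod_A S\to S$ factors through a finite subproduct, the image of each simple direct factor is normal hence trivial or all of $S$, and two distinct factors cannot both surject onto $S$ because they commute elementwise while $S$ is centreless. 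Thus $\prod_A S$ has exactly $|A|$ such subgroups and $\prod_B S$ exactly $|B|$, and the injection $N\mapsto \bar f\inv(N)$ from the first paragraph gives $|B|\leq|A|$.

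If instead $S=\mathbb Z/p\mathbb Z$, I would pass to the continuous $\mathbb F_p$-dual: since every continuous homomorphism to $\mathbb Z/p\mathbb Z$ factors through a finite subproduct, $\mathrm{Hom}_{\mathrm{cont}}(\prod_A S,\mathbb Z/p\mathbb Z)\cong\bigoplus_A\mathbb F_p$, an $\mathbb F_p$-vector space of dimension $|A|$. The epimorphism $\bar f$ induces, by precomposition, an \emph{injective} $\mathbb F_p$-linear map $\bigoplus_B\mathbb F_p\hookrightarrow\bigoplus_A\mathbb F_p$, injectivity being exactly the surjectivity of $\bar f$, whence $|B|=\dim\bigoplus_B\mathbb F_p\leq\dim\bigoplus_A\mathbb F_p=|A|$. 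In both cases $r_S(H)=|B|\leq|A|=r_S(G)$, as required. I expect the genuinely delicate point to be the identification of the rank-sized invariant in the infinite case; once the dual space (abelian case) and the distinguished family of corank-one subgroups (nonabelian case) are in hand, the monotonicity is immediate from the contravariant injectivity already established for $f$.
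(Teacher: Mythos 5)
Your argument is correct. Note, however, that the paper does not prove this lemma at all: it is quoted verbatim as part of \cite[Lemma 8.2.5]{RZbook} (Ribes--Zalesskii), so there is no internal proof to compare against. Your write-up is a sound self-contained reconstruction of the standard argument: the functoriality step ($f^{-1}$ of an open normal subgroup with quotient $S$ is again one, hence $f(M_S(G))\leq M_S(H)$ and $f$ descends to an epimorphism $\prod_A S\twoheadrightarrow\prod_B S$) is exactly how Ribes--Zalesskii reduce the claim, and your two-case identification of a cardinal invariant of $\prod_A S$ equal to $|A|$ --- the coordinate kernels $K_a$ as the \emph{only} open normal subgroups with quotient $S$ when $S$ is nonabelian (via normality of the image of each factor plus centrelessness), and the dimension of the continuous $\mathbb{F}_p$-dual when $S=\mathbb{Z}/p\mathbb{Z}$ --- is the right way to handle the infinite-rank case, where naive cardinality comparison of $|S|^{|A|}$ with $|S|^{|B|}$ indeed says nothing. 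All the individual steps check out: preimages under a continuous epimorphism give an injection on the relevant sets of subgroups; continuous maps to a finite discrete group factor through finite subproducts; and an injective linear map between vector spaces bounds the dimension of the source by that of the target even in infinite dimension. The only cosmetic caveat is that your argument, like the cited one, tacitly uses that $G/M_S(G)\cong\prod_A S$ with $|A|$ well defined; your counting of coordinate kernels (resp.\ dual dimension) actually supplies that well-definedness as a byproduct, which is a small bonus over simply invoking it.
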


Mel'nikov's criterion for freeness of a normal
subgroup~\cite[Theorem 8.6.8]{RZbook} is then:

\begin{Thm}[Mel'nikov]
Let $\pv H$ be a variety of finite groups closed under extension and
let $F$ be a free pro-\pv H group of countably infinite rank.  A
non-trivial closed normal subgroup $N$ of infinite index in $F$ is
free pro-\pv H (of countable rank) if and only if the $S$-rank
$r_S(N)$ is infinite for each finite simple group $S\in \pv H$.
\end{Thm}

In our context, since $G/N$ is a free profinite group of rank $|X|$,
clearly $N$ has infinite index.  So it suffices to show that $N$ has
infinite $S$-rank for all finite simple groups $S\in \pv H$. By
Lemma~\ref{ctsimage} it suffices to show $S^n$ is a continuous image
of $N$ for all $n\geq 1$ (as $r_S(S^n)=n$).  Notice that $\pi(E(I))
=1$ and so $\langle E(I)\rangle \cap G_e\leq N$ (one can in fact
show that $N$ is the closed normal subgroup generated by $\langle
E(I)\rangle \cap G_e$, but we shall not need this). The desired
result is then an immediate consequence of the following technical
lemma, which will be proved in Section~\ref{s:proofoftech2}.

\begin{Lemma}\label{technicallemma2}
Let \pv H be any variety of finite groups containing cyclic groups
of arbitrary cardinality and let $H\in \pv H$.  Then there is a
two-generated finite monoid $M\in \ov{\pv H}$ such that $H$ is the
maximal subgroup of the minimal ideal $J$ of $M$ and $J$ is
generated by idempotents.
\end{Lemma}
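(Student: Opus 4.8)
The plan is to realize $M$ by the classical device of letting a finite cyclic group of units act on a completely simple minimal ideal. Put $m=|H|$, take $A=B=\ZZ/m\ZZ$ as index sets, and let $J=\MM(H,A,B,P)$ be the Rees matrix semigroup over $H$ with sandwich matrix $P\colon B\times A\to H$. I will arrange $P$ to be normalized (its $0$-row and $0$-column consisting of the identity of $H$) and so that the set of entries of $P$ generates $H$; by the standard description of the idempotent-generated subsemigroup of a completely simple semigroup, this guarantees simultaneously that the maximal subgroup of $J$ is $H$ and that $J$ is generated by its idempotents. The unit $t$ will be an invertible element of the translational hull of $J$ whose underlying permutations of the index sets are the cyclic shifts $i\mapsto i+1$ on $A$ and $j\mapsto j+1$ on $B$, twisted by maps $\phi,\psi\colon\ZZ/m\ZZ\to H$; concretely $t\cdot(i,g,j)=(i+1,\phi(i)g,j)$ and $(i,g,j)\cdot t=(i,g\psi(j),j+1)$. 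A direct check shows that associativity of the resulting operation reduces to the single linking identity $P(j,i+1)\phi(i)=\psi(j)P(j+1,i)$ for all $i,j$. I then set $M=\langle t\rangle\cup J$, the ideal extension of $J$ by the finite cyclic group $\langle t\rangle$.

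With this $M$ in hand I would verify the assertions in the following order. First, $M$ is a monoid whose group of units is $\langle t\rangle$ and whose minimal ideal is $J$: indeed $J$ is an ideal, no element of $J$ is invertible, and $J$ is completely simple, hence the unique minimal ideal, while the units are cyclic, say of order $N$. Second, $M$ is generated by $t$ and $a=(0,1,0)\in J$: the two shifts carry $a$ into every $\mathscr R$- and $\mathscr L$-class, so the elements $t^{k}at^{l}$ meet every $\HH$-class of $J$, and since $(i,g,j)(i',g',j')=(i,gP(j,i')g',j')$, multiplying such elements runs their group coordinates through products of entries of $P$, which by construction generate $H$; thus $\langle t,a\rangle\supseteq J$ and therefore equals $M$. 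Third, by the choice of $P$ the maximal subgroup of $J$ is $H$ and $J$ is generated by its idempotents. Finally, every subgroup of $M$ lies in an $\HH$-class, and the $\HH$-classes of $M$ are the group of units $\cong\ZZ/N\ZZ$ and the maximal subgroups of $J$, each isomorphic to $H$; since $\pv H$ contains all finite cyclic groups and contains $H$, and is closed under subgroups, every subgroup of $M$ lies in $\pv H$, so $M\in\ov{\pv H}$.

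The main obstacle is producing a normalized sandwich matrix $P$ whose entries still generate $H$ while a \emph{transitive} cyclic group of units acts on the indices. Transitivity is forced on us by the two-generator requirement, since a single element of $J$ must be spread over all $\HH$-classes; but transitivity pushes $P$ toward a circulant shape, and a genuine circulant matrix is never idempotent-generating -- already for $H=\ZZ/2\ZZ$ and $m=2$ every four-fold alternating product of entries collapses to the identity. The twists $\phi,\psi$ are exactly what break this degeneracy: solving the linking identity recursively expresses every entry of $P$ through the first row and the twists, normalization of the $0$-column imposes a compatibility (``holonomy'') condition on $\phi$ and $\psi$ after one full cycle, and one must check that the resulting system is solvable with entries generating $H$. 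I expect this bookkeeping, together with confirming that $t$ has finite order so that $\langle t\rangle$ is a genuine finite cyclic group (which is where the hypothesis that $\pv H$ contains cyclic groups of every cardinality is used), to be the only real work; the case $m=2$ already exhibits a consistent solution and indicates the general pattern.
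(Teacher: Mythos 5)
Your overall strategy is the classical one the paper alludes to (a cyclic group of units acting transitively on a completely simple minimal ideal), and the paper itself notes that this construction is ``usually formulated in terms of Rees matrix semigroups'' before switching to wreath products precisely to avoid the bookkeeping you defer. The problem is that what you defer is the entire content of the lemma. You correctly identify that a plain circulant sandwich matrix degenerates, that twists $\phi,\psi$ are needed, that the linking identity $P(j,i+1)\phi(i)=\psi(j)P(j+1,i)$ then determines $P$ from one row, and that normalization imposes a holonomy constraint --- but you never exhibit a choice of first row and twists for which the system is consistent, $P$ is normalized, \emph{and} the entries of $P$ generate $H$. ``I expect this bookkeeping\dots to be the only real work'' concedes that the proof of existence has not been given; a worked $m=2$ example does not establish the general pattern. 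Note also that the paper takes the index set of size $n\geq 2|H|-1$, not $|H|$: the slack is used essentially (to guarantee that the conjugated idempotent contributes the identity in the relevant coordinate, so that $yx^jyx^{-j}y$ maps exactly onto $h_{j+1}$), which suggests your choice $A=B=\ZZ/m\ZZ$ with $m=|H|$ may be too tight.

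There is a second, independent gap in the verification that $M\in\ov{\pv H}$. The hypothesis gives cyclic groups in $\pv H$ of \emph{arbitrarily large} order, not of every order (in the application, $\pv H$ need only contain $\ZZ/p\ZZ$ for infinitely many primes), so you cannot conclude that $\ZZ/N\ZZ\in\pv H$ for whatever order $N$ your unit $t$ happens to have; that order is forced by your construction to be a multiple of $|H|$ times the order of the holonomy element, with no freedom to adjust it. The paper avoids this by first choosing $n\geq 2|H|-1$ with $\ZZ/n\ZZ\in\pv H$ and then building $x=(\ov e,a)$ with constant-identity group coordinate, so that $x$ has order exactly $n$ and the group of units is guaranteed to lie in $\pv H$. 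Your construction would need to be reorganized to make the order of $t$ a parameter chosen in advance from the cyclic orders available in $\pv H$, in addition to actually producing the sandwich data.
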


From Lemma~\ref{technicallemma2} we conclude every group in \pv H is
a continuous image of $N$, yielding Theorem~\ref{Theorem2}.  Indeed,
if $\psi:\ovFP H X\to M$ is the canonical surjection, then
$\psi(E(I))=E(\psi(I))$ and so, since $M$ is finite,
\[\psi(\ov{\langle E(I)\rangle})=\psi(\langle E(I)\rangle) = \langle
\psi(E(I))\rangle = \langle E(J)\rangle = J\] as $J = \psi(I)$.   By
Graham's theorem~\cite{Graham,qtheor} the idempotent-generated
subsemigroup of a finite simple semigroup is simple;  hence the
closed subsemigroup generated by the idempotents of a simple
profinite semigroup is simple.  Proposition~\ref{minontominimal}
then easily yields $\psi(N)$ is the maximal subgroup of $J$.

\section{The proof of Lemma~\ref{technicallemma2}}\label{s:proofoftech2}
We prove Lemma~\ref{technicallemma2} first since the proof is easier
and at the same time highlights many of the ideas that will be used
to prove Lemma~\ref{maintechnical}. The construction we use is a
variant on a classical construction.  Usually it is formulated in
terms of Rees matrix semigroups, but it will be more convenient for
us to use wreath products.  If $(Y,S)$ is a transformation monoid,
then $\ov{(Y,S)}$ denotes the augmented transformation monoid
obtained by adjoining to $S$ the constant maps on $Y$.  Set
$[n]=\{1,\ldots,n\}$.

Let $H\in\pv H$ and let $n\geq 2|H|-1$ be an integer such that
$\mathbb Z/n\mathbb Z$ belongs to $\pv H$.  Suppose $e$ is the
identity of $H$ and $H=\{e=h_1,\ldots,h_m\}$.  Let $C_n$ be the
cyclic group of order $n$ generated by the cyclic permutation $a=(1\
2\cdots n)$. Consider the following two elements of the wreath
product $H\wr \ov{([n],C_n)}$:
\begin{align*}
x &= (\ov e,a)\ \text{where}\ j\ov e =e,\ \text{all}\ j\in [n]\\
y &= (Y,\ov 1)\ \text{where}\ jY =\begin{cases} h_j & 1\leq j\leq m
\\ e & m<j\leq n.\end{cases}
\end{align*}

Let $M$ be the submonoid generated by $x$ and $y$.  First observe
$x$ is an invertible element of order $n$.  On the other hand $y$ is
an idempotent since $y^2 =(Y{}^{\ov 1}\!{Y},\ov 1)$ and $j(Y{}^{\ov
1}\!{Y}) = (jY)(1Y)=(jY)h_1=(jY)e=jY$.  A routine application of
Lemma~\ref{structureofwreath} yields the minimal ideal $J$ of $M$ is
$M\cap H\wr ([n],\ov{[n]})$.  Indeed, $M\cap H\wr ([n],\ov{[n]})$
contains $y$ and is a subsemigroup of the simple semigroup $H\wr
([n],\ov{[n]})$ (Lemma~\ref{structureofwreath}) and hence is simple.
Consideration of the projection $M\to \ov{([n],C_n)}$, which is onto
by definition of $x,y$, shows $J\subseteq M\cap H\wr
([n],\ov{[n]})$, since $\ov{[n]}$ is the minimal ideal of
$\ov{([n],C_n)}$.  We conclude $J=  M\cap H\wr ([n],\ov{[n]})$ and
in particular $y\in J$.

  According to
Lemma~\ref{structureofwreath} the map sending $(f,\ov 1)$ to $1f$
restricts to an isomorphism $\theta$ from the maximal subgroup of
$H\wr ([n],\ov{[n]})$ at $y$ to $H$.  We show that $\theta$ is still
surjective when restricted to the maximal subgroup $G_y$ of $M$ at
$y$.  In fact, we show that each element of $H$ is $\theta(z)$ for
some $z\in G_y$ which is a product of idempotents.  Graham's
theorem~\cite{Graham,qtheor} implies a simple semigroup is
generated by idempotents if and only if each element of the maximal
subgroup is a product of idempotents and so this will complete the
proof of Lemma~\ref{technicallemma2}.  Actually, the proof of
Theorem~\ref{Theorem2} can be made to work using only that each
element of $G_y$ is a product of idempotents.

The key observation is $x^jyx^{-j}$ is an idempotent for $1\leq
j\leq m-1$ and \[x^jyx^{-j}=(\ov e,a^j)(Y,\ov 1)(\ov e,a^{-j}) =
({}^{a^{j}}\!{Y},\ov {n-j+1)}).\]  Therefore, the element $z\in G_y$
defined by \[z=yx^jyx^{-j}y= (Y,\ov 1)({}^{a^{j}}\!{Y},\ov
{n-j+1})(Y,\ov 1) = (Y({}^{\ov {j+1}}\!{Y})({}^{\ov {n-j+1}}\!{Y}),\ov
1)\] is a product of idempotents and \[\theta(z)=1\left(Y({}^{\ov
{j+1}}\!{Y})({}^{\ov {n-j+1}}\!{Y})\right)=1Y\cdot (j+1)Y\cdot (n-j+1)Y =
h_{j+1}\] since $1Y=e=(n-j+1)Y$, where the latter equality holds
since $n\geq 2m-1$ implies $n-j+1\geq n-m+2\geq m+1$.  This
establishes each element of $H$ is $\theta(z)$ for some $z\in G_y$
which is a product of idempotents, finishing the proof of
Lemma~\ref{technicallemma2}.\qed

\section{The proof of Lemma~\ref{maintechnical}}\label{s:proofofmaintech}
The proof of Lemma~\ref{maintechnical} relies heavily on the wreath
product and forms the technical core of this paper.  The
construction is reminiscent of the one used in the proof of
Lemma~\ref{technicallemma2}. We shall find it convenient to use the
formulation of wreath products in terms of row monomial matrices.
Let $S$ be a semigroup.  Then $RM_n(S)$ denotes the monoid of all
$n\times n$ row monomial matrices with entries in $S$; in other
words it consists of all matrices over $S\cup \{0\}$ such that each
row has exactly one non-zero entry. The binary operation is usual
matrix multiplication. It is well known~\cite{Arbib,qtheor} $RM_n(S)
\cong S\wr ([n],T_n)$ where $T_n$ is the full transformation monoid
of degree $n$.  An element $(f,a)$ corresponds to the matrix $M$
with $M_{i,ia}=if$, $1\leq i\leq n$, and all other entries zero. In
particular, if $a$ is a constant map to $j$, then $M$ has all its
non-zero entries in column $j$.

From this viewpoint, an iterated wreath product $S\wr (B,T)\wr
(A,U)$ can be viewed as $|A|\times |A|$ block row monomial matrices
where the blocks are $|B|\times |B|$ row monomial matrices over $S$.
The term \emph{block entry} shall mean a matrix from $S\wr (B,T)$
while the term \emph{entry} shall always mean an element of the
semigroup $S$. Having dispensed with the preliminaries, we now turn
to the proof of Lemma~\ref{maintechnical}

Let $B$ be the set of $\L$-classes of $J$.  Denote by
$\mathsf{RLM}_J(M)$ the quotient of $M$ by the kernel of its action
on the right of $B$; note that $\mathsf{RLM}_J(M)$ contains all the
constant maps. Since the Sch\"utzenberger representation of $M$ on $I$ is
faithful, we can view $M$ as a monoid of $b\times b$ row monomial
matrices over $K$ where $b=|B|$. Moreover, the discussion in
Section~\ref{minimal} shows that an element $k$ of the maximal
subgroup $K$ at $\p(e)$
can be identified with the row monomial matrix having $k$ in every
entry of the first column.  For $x\in \ovFP H
X$, denote by $M_x$ the row monomial matrix associated to $\p(x)$.

Let $N=\ker \alpha$ and choose a set-theoretic section $\sigma:K\to
H$.  Then $H=N\sigma(K)$.  Denote by $M^{\sigma}_x$ the row monomial
matrix over $H$ obtained from $M_x$ by applying $\sigma$ entry-wise.
Let $n=|N|$ and let $m$ be a positive integer such that
$(M_{x_1}^{\sigma})^m$ is idempotent. Choose a prime
$p>\max\{m,n^b\}$ so that $\mathbb Z/p\mathbb Z\in \pv H$; such a
prime exists by our assumption on \pv H.
Denote by $C_p$ the cyclic group of order $p$ generated by the
permutation $(1\ 2\cdots p)$. Our monoid $M'$ will be a certain
submonoid of the iterated wreath product
\[W=H\wr (B,\mathsf{RLM}_J(M))\wr \ov{([p],C_p)}.\]  Observe that
$W\in \ov{\pv H}$.

We begin our construction of $M'$ by defining
\begin{equation*}
\til x_1 = \begin{bmatrix} 0 & M_{x_1}^{\sigma}& 0              &\cdots & 0     \\
                            0 & 0               &M_{x_1}^{\sigma}&0      &\cdots \\
                            0 & 0               &  0             &\ddots & 0     \\
                            0 & 0               & \cdots         & 0     & M_{x_1}^{\sigma}\\
                            M_{x_1}^{\sigma}& 0 &\cdots          &
                            0&0\end{bmatrix}.
\end{equation*}
In other words $\til x_1$ acts on the $[p]$ component by the cyclic
permutation $(1\ 2\ \cdots p)$ and each block entry of $\til x_1$
from $H\wr (B,\mathsf{RLM}_J(M))$ is $M_{x_1}^{\sigma}$. Set
$\ell=n^b$; so $p>\ell$ by choice of $p$. Let
$1=N_1,N_2\ldots,N_\ell$ be the distinct elements of $N^b$. We
identify $N^b$ with the group of diagonal $b\times b$ matrices over
$N$.  In particular, $N^b$ is a subgroup of $H\wr
(B,\mathsf{RLM}_J(M))$, as $M$ is a monoid.  In fact, there is a
natural onto homomorphism \[\ov{\alpha}:H\wr
(B,\mathsf{RLM}_J(M))\to K\wr (B,\mathsf{RLM}_J(M))\] induced by
$\alpha:H\to K$ and, moreover, it is straightforward to verify that
$\ov {\alpha}(x)=\ov {\alpha}(y)$ if and only if $x=N_jy$ some
$1\leq j\leq \ell$.  The map $\ov{\alpha}$ simply applies $\alpha$
entry-wise.

Next let us define, for $i=2,\ldots, n$, a $p\times p$ block row
monomial matrix by
\begin{equation*}
\til x_i = \begin{bmatrix} M_{x_i}^{\sigma}        & 0     &\cdots & 0     \\
                           N_2M_{x_i}^{\sigma}     & 0     &\cdots & 0 \\
                           \vdots                 & \vdots&\vdots & \vdots     \\
                           N_{\ell}M_{x_i}^{\sigma}&  0    &\cdots &   0   \\
                           M_{x_i}^{\sigma}        &  0    &\cdots &0 \\
                              \vdots               & \vdots&\vdots &\vdots\\
                           M_{x_i}^{\sigma}        &0      &\cdots &0   \end{bmatrix}
\end{equation*}
so $\til x_i$ has all its non-zero blocks in the first column.  The
$j^{th}$ block entry of the first column is $N_jM_{x_i}^{\sigma}$ if
$j\leq \ell$ and otherwise is $M_{x_i}^{\sigma}$. Then $\til
x_1,\ldots,\til x_n\in W$ and we have a map $X\to W$ given by
$x_i\mapsto \til x_i$.  Extend this to a continuous morphism
$\eta:\ovFP H X\to W$ and set $M'=\eta(\ovFP H X)$.  Our goal is to
show $M'$ is the desired monoid.  We begin by verifying that $\p$
factors through $\eta$.

\begin{Prop}\label{formnumber1}
Let $u\in \ovFP H X$.  Then each $U\in H\wr (B,\mathsf{RLM}_J(M))$
appearing as a block entry of $\eta(u)$ satisfies
$\ov{\alpha}(U)=M_u$.  As a consequence $\eta(u)=\eta(u')$ implies
$\p(u)=\p(u')$ and so $\p$ factors through $\eta$ as $\rho\eta$
where $\rho:M\to M'$ takes $\eta(u)$ to $\ov{\alpha}(U)$ where $U$
is any block entry of $\eta(u)$.
\end{Prop}
\begin{proof}
The second statement is immediate from the first.  We prove the
first statement for words $w\in X^*$ by a simple induction on
length, the case $|w|=0$ being trivial.  If $w=x_1u$, then the
definition of $\til x_1$ implies the block entries of $\eta(w)$ are
of the form $M_{x_1}^{\sigma}U$ where $U$ runs over the block
entries of $\eta(u)$ and the result follows by induction. The case
$w=x_iu$, $2\leq i\leq n$, is similar only the block entries of
$\eta(w)$ are now of the form $N_jM_{x_i}^{\sigma}U$ with $U$ the
block entry of $\eta(u)$ in the first row. If $u\in \ovFP H X$, then
since $X^*$ is dense, there exists a word $w\in X^*$ such that
$\eta(u)=\eta(w)$ and $M_u=M_w$.  The result now follows from the
case of words.
\end{proof}

 Our next goal is to show that if $w$ is a word whose
support contains some letter other than $x_1$, then each preimage of
$M_w$ under $\ov {\alpha}$ is a block entry of $\eta(w)$.  This will
be crucial in showing that the maximal subgroup of the minimal ideal
of $M'$ is isomorphic to $H$.  To effect this we shall need the following
lemma.

\begin{Lemma}\label{preimages}
Let $u,w\in \ovFP H X$ and suppose $W_1,\ldots, W_{\ell}$ are the
preimages of $M_w$ under $\ov {\alpha}$ and $U$ is a fixed preimage
of $M_u$ under $\ov {\alpha}$. Then $W_1U,\ldots, W_{\ell}U$,
respectively $UW_1,\ldots,UW_{\ell}$, are all the preimages of
$M_wM_u$, respectively $M_uM_w$, under $\ov {\alpha}$.
\end{Lemma}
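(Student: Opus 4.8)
The plan is to leverage the two facts already in hand: that $\ov\alpha$ is a homomorphism obtained by applying $\alpha$ entrywise (so it preserves the underlying transformation of a row monomial matrix, i.e.\ the positions of the non-zero entries), and that its fibres over points of its image are exactly the left cosets $N^by$, as recorded by the equivalence $\ov\alpha(x)=\ov\alpha(y)\iff x=N_jy$. Each such fibre has exactly $\ell$ elements, because left multiplication by a diagonal matrix from $N^b$ is free on row monomial matrices over the group $H$: every row has a single, necessarily invertible, entry, so the diagonal factor can be cancelled row by row. Since $\ov\alpha$ is a homomorphism, every $W_iU$ is automatically a preimage of $\ov\alpha(W_i)\ov\alpha(U)=M_wM_u$ and every $UW_i$ a preimage of $M_uM_w$; as each of these fibres has exactly $\ell$ members, in both cases it suffices to prove that the $\ell$ listed products are pairwise distinct.

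For the products $W_iU$ I would fix one preimage $W_1$ and write $W_i=N_{\sigma(i)}W_1$, where $\sigma$ is a permutation of $\{1,\dots,\ell\}$, since the $W_i$ run over the coset $N^bW_1$. Then $W_iU=N_{\sigma(i)}(W_1U)$, so the family $\{W_iU\}$ is precisely the left coset $N^b(W_1U)$; by the freeness just noted these are $\ell$ distinct elements, hence all preimages of $M_wM_u$. This direction is painless precisely because the varying factor $N_{\sigma(i)}$ already sits on the outside, so the left-coset description transfers verbatim.

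The products $UW_i=UN_{\sigma(i)}W_1$ are the delicate case, and I expect this to be the main obstacle: the varying factor $N_{\sigma(i)}$ is now trapped between $U$ and $W_1$, and since left multiplication by $U$ need not be injective, the left-coset description cannot simply be read off. The plan is to exploit the normality of $N=\ker\alpha$ in $H$: conjugating a diagonal $N$-matrix entrywise by $U$ keeps its entries in $N$, so for each $N_j\in N^b$ there is $N_j'\in N^b$ with $UN_j=N_j'U$, whence $UW_i=N_{\sigma(i)}'(UW_1)$ lies in the fibre $N^b(UW_1)$ over $M_uM_w$. The crux is that $N_j\mapsto N_j'$ should be a bijection of $N^b$, equivalently that the $UW_i$ be distinct, and this is exactly the point at which $U$ must act invertibly on $B$ (the underlying transformation of $M_u$ must be a bijection); securing this from the features of the construction is the step I expect to fight hardest for. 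Equivalently, I would first upgrade the fibre description to a two-sided one, showing that under this invertibility the fibres are also right cosets $W_1N^b$, so that $UW_i=(UW_1)N_{\sigma(i)}$ exhibits $\{UW_i\}$ as the right coset $(UW_1)N^b$ and the count closes symmetrically with the first case.
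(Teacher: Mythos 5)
Your treatment of the first assertion (the products $W_iU$) is correct and is exactly the paper's argument: the fibre of $\ov{\alpha}$ over a point of its image is the left coset $N^b\cdot(\text{fixed preimage})$, left multiplication by $N^b$ is free on row monomial matrices over the group $H$, and $W_iU=N_{\sigma(i)}(W_1U)$ exhibits $\{W_iU\}$ as the whole fibre over $M_wM_u$.

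Your worry about the second assertion is not a presentational gap you should expect to close by staring harder at the construction: it is a genuine gap, and the paper's own proof steps over it. The paper handles the $UW_i$ case by asserting that $UN_1,\dots,UN_\ell$ are \emph{all} the $\ov{\alpha}$-preimages of $M_u$ and then citing the first case. Each $UN_j$ is indeed a preimage, but, exactly as you diagnose, row $i$ of $UN_j$ sees only the $(i\tau,i\tau)$ entry of $N_j$, where $\tau$ is the underlying transformation of $M_u$ on $B$; hence $|\{UN_j\}|=n^{|B\tau|}$, which is strictly smaller than $\ell=n^b$ whenever $\tau$ is not onto and $N\neq 1$. Since $u$ is an arbitrary element of $\ovFP H X$, $\tau$ need not be a permutation --- if $\p(u)$ lies in the minimal ideal $J$ it acts on $B$ as a constant map. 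Concretely, with $|B|=2$, $K$ trivial (so $N=H$) and $U=\bigl(\begin{smallmatrix}1&0\\1&0\end{smallmatrix}\bigr)$, the set $\{UW_i\}$ has only $|H|$ elements while the fibre over $M_uM_w$ has $|H|^2$. So the second half of the lemma is false as stated; your conjugation device $UN_j=N_j'U$ (valid by normality of $N$ in $H$) cannot rescue it because $N_j\mapsto N_j'$ fails to be injective in precisely these cases. The honest conclusion is that the statement needs the extra hypothesis that $M_u$ acts on $B$ by a permutation, and one must then worry about whether the places the paper invokes this half --- the case $w=x_1u$ of Proposition~\ref{allthere} and the last step of the proof of Lemma~\ref{maintechnical}, both of which take the fixed left factor to be a power of $M_{x_1}^{\sigma}$ --- actually satisfy it; nothing in the stated reductions forces $x_1$ to act on $B$ by a permutation.
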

\begin{proof}
The preimages of $M_w$ under $\ov{\alpha}$ are
$N_1M_w^{\sigma},\ldots,N_{\ell}M_w^{\sigma}$.  But as
$M_w^{\sigma}U$ is a preimage of $M_wM_u$, it follows
$\{N_1M_w^{\sigma}U,\ldots, N_{\ell}M_w^{\sigma}U\}$ is the complete
set of preimages of $M_wM_u$ under $\ov{\alpha}$.  For the preimages
of $M_uM_w$, note that $UN_1,\ldots,UN_{\ell}$ are the
$\ov{\alpha}$-preimages of $M_u$ so the previous case applies.
\end{proof}

Observe that if $w\in X^*$ and the support of $w$ is not contained
in $\{x_1\}$, then by  definition of $\til x_2,\ldots,\til x_n$,
the block entries of $\eta(w)$ form a single column, that is the
$\ov{([p],C_p)}$ component of $\eta(w)$ is a constant map.
We can now prove the aforementioned fact concerning preimages.

\begin{Prop}\label{allthere}
Let $w\in X^*$ have support not contained in $\{x_1\}$.  Then each
preimage of $M_w$ under $\ov {\alpha}$ appears as a block entry of
$\eta(w)$.
\end{Prop}
\begin{proof}
Let $S$ be the set of words in $X^*$ with support containing an
element outside of $\{x_1\}$.  We proceed by induction on $|w|$. If
$|w|=1$, then the proposition follows from the definition of $\til
x_2,\ldots,\til x_n$.

Suppose it is true for words in $S$ of length $n$ and let $w\in S$
have length $n+1$.  If the first
letter of $w\neq x_1$, then $w=ux_i$ with $u\in S$ some $i$; else $w=x_1u$
where $u\in S$.  In the case $w=x_1u$ the block entries of $\eta(w)$
are precisely the products of the form $M_{x_1}^{\sigma}U$ where $U$
runs over the block entries of $\eta(u)$.  By induction and
Lemma~\ref{preimages} it follows that the block entries of $\eta(w)$
are as required.  In the case $w=ux_i$, the block entries of
$\eta(u)$ are in a single column, say column $j$.  Let $V$ be the block
entry in row $j$ of $\til x_i$.  Then the block entries of $\eta(w)$
are all products of the form $UV$ where $U$ is a block entry of
$\eta(u)$. So again Lemma~\ref{preimages} yields each
$\ov{\alpha}$-preimage of $M_w$ is a block entry of $\eta(w)$.  This
completes the proof.
\end{proof}


\begin{Cor}\label{minimalidealguyallthere}
If $w\in I$, then the block entries of $\eta(w)$ are in a single
column and each preimage under $\ov{\alpha}$ of $M_w$ appears as a
block entry of $\eta(w)$.
\end{Cor}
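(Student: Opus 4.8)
The goal of Corollary~\ref{minimalidealguyallthere} is to extend Proposition~\ref{allthere} from words in $S$ (those with support not contained in $\{x_1\}$) to arbitrary elements $w$ of the minimal ideal $I$ of $\ovFP H X$. The plan is to exploit two facts that let us reduce to the already-proved propositions: first, that $X^*$ is dense in $\ovFP H X$ and $\eta$ is continuous, so we may work with word approximations; and second, that membership in $I$ forces the relevant support condition. Recall from the setup of Theorem~\ref{Theorem1} that the idempotent $e$ was chosen so that $x_1^{\omega}e = e = ex_2^{\omega}$, which means $e$ (and hence everything in $I=MxM$) genuinely involves the letter $x_2$; more directly, any $w\in I$ satisfies $w = uwv$ for suitable $u,v$, so its image depends on generators beyond $x_1$ alone. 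The reason we cannot simply invoke $w\in S$ is that $w\in I$ is a profinite element, not a word.

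First I would establish the column statement. By Proposition~\ref{formnumber1}, $\p$ factors through $\eta$, and we know $\p(w)=M_w$ lies in the minimal ideal $J$ of $M$. Since elements of $J$ act as constant maps on $B$ (as recalled in Section~\ref{minimal}), the $\ov{([p],C_p)}$-component of $\eta(w)$ should likewise be forced to be constant, giving the block entries of $\eta(w)$ in a single column. Concretely, I would approximate $w$ by a word $\til w\in X^*$ with $\eta(w)=\eta(\til w)$ and $M_w=M_{\til w}$, using density of $X^*$ together with the fact that $\eta$ and $\p$ are continuous with finite codomains. Because $M_{\til w}=M_w\in J$ acts as a constant map on $B$, and because an element whose support is contained in $\{x_1\}$ maps under $\eta$ to a matrix acting as a cyclic permutation on $[p]$ (never a constant map), the approximating word $\til w$ must have support meeting $X\setminus\{x_1\}$, i.e.\ $\til w\in S$. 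Thus the column claim follows from the observation preceding Proposition~\ref{allthere}, and the preimage claim follows directly from Proposition~\ref{allthere} applied to $\til w$.

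The one genuine point requiring care, which I expect to be the main obstacle, is the approximation argument itself: I must guarantee that there is a single word $\til w$ simultaneously satisfying $\eta(w)=\eta(\til w)$ \emph{and} $M_w=M_{\til w}$ \emph{and} $\til w\in S$. The first two equalities are handled exactly as in the proof of Proposition~\ref{formnumber1}: the map $u\mapsto (\eta(u),\p(u))$ into the finite monoid $W\times M$ is continuous, so a dense net of words converges, and eventually all of them agree with $w$ on both coordinates. The subtle part is ensuring such an eventual word lies in $S$ rather than being a power of $x_1$. This is where I would use that $\eta(w)$ acts as a constant map on the $[p]$-coordinate (forced by $M_w\in J$, as above): any word whose support is $\{x_1\}$ maps to a matrix permuting the $[p]$-blocks cyclically and thus cannot have the same $\eta$-image as $w$. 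Hence any approximating word with the correct $\eta$-image automatically lies in $S$, and the corollary follows by combining the single-column observation with Proposition~\ref{allthere}.
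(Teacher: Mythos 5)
Your overall strategy --- approximate $w$ by a word $\til w$ with $\eta(\til w)=\eta(w)$, show $\til w$ lies in $S$, and invoke Proposition~\ref{allthere} --- is the same reduction the paper performs, but the step where you certify $\til w\in S$ rests on a non sequitur. You assert that because $M_w$ lies in the minimal ideal $J$ of $M$ and hence acts as a constant map on $B$, the $\ov{([p],C_p)}$-component of $\eta(w)$ is ``forced to be constant,'' and you then use this constancy to rule out $\til w$ being a power of $x_1$. But the two coordinates are independent: the factorization $\rho\colon M'\to M$ of Proposition~\ref{formnumber1} discards the $[p]$-component entirely (it selects a block entry and applies $\ov{\alpha}$), so knowing $\rho(\eta(w))=M_w\in J$ gives no information about how $\eta(w)$ acts on $[p]$. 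Indeed, if $M$ happened to be trivial, every $u\in \ovFP H X$ would satisfy $M_u\in J$, yet $\eta(x_1)=\til x_1$ permutes the $[p]$-blocks cyclically. Since your proof that $\til w\in S$ depends on the constancy of the $[p]$-component, and your proof of that constancy depends only on $M_w\in J$ rather than on $w\in I$, the argument is circular at its key point.

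The gap is repairable in two ways. The paper's route: the support homomorphism onto the free semilattice $(P(X),\cup)\in\ov{\pv H}$ is continuous and its minimal ideal is $\{X\}$, so every word sufficiently close to $w\in I$ has full support $X$; one then picks such a word with the same $\eta$-image and applies Proposition~\ref{allthere} together with the remark preceding it (which also yields the single-column claim). Alternatively, in the spirit of your argument: $\eta(w)\in\eta(I)=J'$, the minimal ideal of $M'$ (Proposition~\ref{minontominimal}), and $J'$ must project into the minimal ideal of $\ov{([p],C_p)}$, namely the constant maps; this is what legitimately forces the $[p]$-component of $\eta(w)$ to be constant and excludes $\til w=x_1^k$. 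Either patch completes your proof, but as written the justification of the central step is incorrect.
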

\begin{proof}
Since $\ov{\pv H}$ contains the free semilattice $(P(X),\cup)$ it
follows that if $\{w_r\}$ is a sequence of words in $X^*$ converging to
$w$, then there exists $R$ such that for $r\geq R$ the word $w_r$
has support $X$.  Now there exists $s\geq R$ so that
$\eta(w)=\eta(w_s)$.  Since $w_s$ has full support, the corollary
then follows from Proposition~\ref{allthere} and the remark
preceding that proposition.
\end{proof}

By Corollary~\ref{minimalidealguyallthere} if $w\in I$, then the
$\ov{([p],C_p)}$ component of $\eta(w)$ is a constant map, that is
the block entries of $\eta(w)$ appear in a single column. Moreover,
Proposition~\ref{formnumber1} shows that each block entry of
$\eta(w)$ is a preimage of $M_w$ under $\ov{\alpha}$. But $M_w$,
being in the minimal ideal, has the shape of a constant map, i.e.\
it has only one non-zero column.  Hence $\eta(w)$ has all its
entries in a single column, that is, the $(B,\mathsf{RLM}_J(M))\wr
\ov{([p],C_p)}$ component of $\eta(w)$ is a constant map. Since
$\eta(I)$ is the minimal ideal $J'$ of $M'$
(Proposition~\ref{minontominimal}), we conclude $J'\subseteq M'\cap
H\wr (B\times [p],\ov{B\times [p]})$ and hence is simple by
Lemma~\ref{structureofwreath}.  Since simple semigroups form a
variety of finite semigroups, $M'\cap H\wr (B\times [p],\ov{B\times
  [p]})$ is simple. Therefore, we in fact have $J'=M'\cap  
H\wr (B\times [p],\ov{B\times [p]})$.  It remains to construct an
isomorphism $\theta:G_{\eta(e)}\to H$ such that
$\rho\theta\inv=\alpha$.

First note that since $ex_2^{\omega}=e$, it must be the case
$\eta(e)$ is a block matrix with each block entry in the first
column.  Also, the discussion in Section~\ref{minimal} indicates
$M_e$ is a matrix whose only non-zero column is the first column and
whose non-zero entries are comprised by the identity of $K$. Since
the block entries of $\eta(e)$ are preimages of $M_e$ under
$\ov{\alpha}$ (Proposition~\ref{formnumber1}), we deduce that all the
entries of $\eta(e)$ are in the
first column and belong to $N$. Lemma~\ref{structureofwreath} says
the map $\theta:H\wr (B\times [p],\ov{B\times [p]})\to H$ selecting the
$1,1$ entry is an isomorphism from the maximal subgroup at $\eta(e)$
of $H\wr (B\times [p],\ov{B\times [p]})$ to $H$.  In particular, the
$1,1$ entry of $\eta(e)$ is the identity of $H$.

We need to show that the restriction of $\theta$ to $G_{\eta(e)}$ is
onto and $\rho\theta\inv=\alpha$.  Let us prove the second assertion
assuming the first.  By Proposition~\ref{formnumber1} if $w\in \ovFP
H X$ maps under $\eta$ to $\theta\inv (h)$, then $\p(w)=\rho\eta(w)$
is obtained by choosing say the $1,1$ block entry of $\eta(w)$ and
applying $\alpha$ entry-wise. Since an element $k$ of $K=G_{\p(e)}$,
viewed as a row monomial matrix, has $k$ as each non-zero entry, it
follows the image of $\theta\inv(h)$ in $K$ is obtained by
evaluating $\alpha$ on the $1,1$ entry of $\eta(w)$.  But the $1,1$
entry of $\eta(w)$ is precisely $h$ by definition of $\theta$ so
$\rho\theta\inv (h)=\alpha(h)$, as required.

Thus we are left with proving $\theta$ is onto.  Since $\rho$ must
take $G_{\eta(e)}$ onto $K$ (Proposition~\ref{minontominimal}), it
follows from the discussion in the previous paragraph that we must
be able to obtain a preimage under $\alpha$ of each element of $K$
as the $1,1$ entry of some element of $G_{\eta(e)}$, that is,
$\alpha(\theta(G_{\eta(e)}))=K$. So it suffices to prove $\ker\alpha
=N$ is contained in the image of $\theta$.

Recall that $p$ was chosen so that $p>m$ where $(M_{x_1}^{\sigma})^m
= (M_{x_1}^{\sigma})^{\omega}$.  We can thus find a positive integer
$r$ so that $1\equiv rm\bmod p$.
Then \[\til x_1^{mr} = \begin{bmatrix} 0 & (M_{x_1}^{\sigma})^{\omega}& 0              &\cdots & 0     \\
                            0 & 0               &(M_{x_1}^{\sigma})^{\omega}&0      &\cdots \\
                            0 & 0               &  0             &\ddots & 0     \\
                            0 & 0               & \cdots         & 0     & (M_{x_1}^{\sigma})^{\omega}\\
                            (M_{x_1}^{\sigma})^{\omega}& 0 &\cdots          &
                            0&0\end{bmatrix}.\]
Set $C=\til x_1^{mr}$.  Then $C^j$ has the block form of the
permutation matrix corresponding to $(1\ 2\ \cdots p)^j$ and each
block entry of $C^j$ is $(M_{x_1}^{\sigma})^{\omega}$.  By
Corollary~\ref{minimalidealguyallthere} each preimage of $M_e$ under
$\overline{\alpha}$ appears as a block entry of $\eta(e)$.  Since
$\overline{\alpha}((M_{x_1}^{\sigma})^{\omega}) = M_{x_1}^{\omega}$
and $x_1^{\omega}e=e$, it follows from Lemma~\ref{preimages} that
the elements of the form $(M_{x_1}^{\sigma})^{\omega}U$, where $U$
runs over the block entries of $\eta(e)$, yield all the preimages of
$M_e$ under $\overline{\alpha}$ (with perhaps some repetition). Each
such matrix is the $1,1$ block entry of a product $C^jM_e$ for a
correctly chosen $j$. Now the $\ov{\alpha}$-preimages of $M_e$ are
the matrices whose first column has entries from $N$ and whose
remaining columns consist of zeroes.  Consequently any element of
$N$ can be the $1,1$ entry of an $\ov{\alpha}$-preimage of $M_e$ and
so every element of $N$ is the $1,1$ entry of some $C^j\eta(e)$.
Since $\eta(e)$ has the identity of $H$ in the $1,1$ entry,
$\eta(e)C^j\eta(e)$ is an element of $G_{\eta(e)}$ with the same
$1,1$ entry as $C^j\eta(e)$. Thus $\theta(G_{\eta(e)})$ contains $N$
as required. This completes the proof of Lemma~\ref{maintechnical},
thereby establishing Theorems~\ref{Theorem1} and~\ref{Theorem2}.\qed

\bibliographystyle{abbrv}
\bibliography{standard}

\end{document}